\documentclass[11pt,reqno]{amsart}

\usepackage{mlmodern}
\usepackage{amsmath, amssymb, amsthm, mathtools}
\usepackage{microtype}
\usepackage{tikz}
\usepackage{hyperref}
\hypersetup{colorlinks=true, linkcolor=black, citecolor=black, urlcolor=blue}

\theoremstyle{plain}
\newtheorem{theorem}{Theorem}[section]
\newtheorem{lemma}[theorem]{Lemma}
\newtheorem{proposition}[theorem]{Proposition}
\newtheorem{corollary}[theorem]{Corollary}

\theoremstyle{definition}
\newtheorem{definition}[theorem]{Definition}

\theoremstyle{remark}
\newtheorem{remark}[theorem]{Remark}
\newtheorem{question}[theorem]{Question}

\newcommand{\N}{\mathbb{N}}
\newcommand{\Z}{\mathbb{Z}}
\newcommand{\bN}{\beta\mathbb{N}}
\newcommand{\Nstar}{\mathbb{N}^{*}}
\newcommand{\FS}{\mathrm{FS}}
\newcommand{\E}{E(\mathbb{N}^{*})}
\newcommand{\K}{K(\beta\mathbb{N})}

\title[Explicit witnesses at every gap]{Explicit Witnesses at Every Gap\\
of the Depth Filtration of $\bN$}

\author{Carl Aza}
\email{carljuanaza@gmail.com}

\subjclass[2020]{Primary 54D80; Secondary 22A15, 05D10, 11B13}
\keywords{Stone--\v{C}ech compactification, ultrafilter, depth filtration,
Sidon set, sumset, doubly exponential sequence}

\date{\today}

\begin{document}

\begin{abstract}
Let $\Sigma_{1} = \Nstar$ and $\Sigma_{k+1} = \overline{\Nstar + \Sigma_{k}}$ be
the cumulative depth filtration of $\bN$, the analogue for $(\N,+)$ of a chain
of closed ideals that Protasov and Protasova studied for discrete groups, where
strict descent follows from a theorem of Lutsenko and Protasov. For every $k$ we
give an explicit set whose closure meets $\Sigma_{k}$ but not $\Sigma_{k+1}$. Fix
the doubly exponential sequence $e_{n} = 2^{2^{n}}$, partition it into $k$
subsequences $E_{0}, \dots, E_{k-1}$ by the residue of the index modulo $k$, and
set $A_{k} = E_{0} + \cdots + E_{k-1}$. We prove that any sum $q_{0} + \cdots +
q_{k-1}$ of free ultrafilters with $E_{t} \in q_{t}$ lies in $\Sigma_{k}
\setminus \Sigma_{k+1}$. The engine is a master lemma, proved by induction on
$j$: if a sum $F_{1} + \cdots + F_{j}$ of subsequences of $\{e_{n}\}$ with
pairwise disjoint index sets belongs to a free ultrafilter $s$, then $s \notin
\Sigma_{j+1}$. The proof rests on a single rigidity of the doubly exponential
sequence: a fixed difference forces the largest index in any
shift-intersection, once it is large, to cancel within its own subsequence, which makes every
shift-intersection descend by at least one level. The same witnesses lie in the
gaps of the pure filtration.
\end{abstract}

\maketitle

\section{Introduction}

The Stone--\v{C}ech compactification $\bN$ of $(\N, +)$ is a compact
right topological semigroup whose algebra encodes much additive Ramsey theory
\cite{HindmanStrauss1998}. Write $\Nstar = \bN \setminus \N$ for the free
ultrafilters. For a discrete group $G$, Protasov and Protasova
\cite[\S 4]{ProtasovProtasova2017} studied the chain
\[
  I_{G,0} = G^{*}, \qquad I_{G,n+1} = \overline{G^{*}I_{G,n}}.
\] Its analogue for $(\N,+)$ is the \emph{cumulative
depth filtration}
\[
  \Sigma_{1} = \Nstar,
  \qquad
  \Sigma_{k+1} = \overline{\Nstar + \Sigma_{k}}
  \quad (k \geq 1),
\]
a descending chain of closed two-sided ideals of $\Nstar$. (Our $\Sigma_{k}$
corresponds to $I_{G, k-1}$, an index shift we adopt so that $\Sigma_{k}$ is built
from $k$-fold sums.) Its gaps $\Sigma_{k} \setminus \Sigma_{k+1}$ measure
additive depth: roughly, a point of the $k$-th gap is built from iterated sums
to depth $k$ but no further.

Protasov and Protasova proved their chain strictly decreasing for every infinite group
\cite[Theorem~4.2]{ProtasovProtasova2017}, by applying a theorem of Lutsenko and
Protasov on relatively sparse sets \cite[Theorem~4(2)]{LutsenkoProtasov2011}. The
Lutsenko--Protasov proof builds its witnessing sets from inductively chosen sequences; it uses a thin
set at the first level and writes out sets $Q_{1}$ and $Q_{2}$, ordered sums of two
and three sequences, for the next two levels. The group results do not formally
give the statement for $\N$, which is not a group, and we have not found it
recorded for $k \geq 2$. For $\N$ the first gap is classical: any free
ultrafilter on a Sidon set lies in $\Sigma_{1} \setminus \Sigma_{2}$
(Lemma~\ref{lem:firstgap}; see \cite[Exercise~4.1.7]{HindmanStrauss1998} and
\cite{FilaliLutsenkoProtasov2009}; cf.\ \cite[Theorem~8.1]{vanDouwen1991}). Carried out inside $\N$, the
Lutsenko--Protasov construction would presumably also give members of the next
two gaps, though not in closed form. We treat all levels at once,
producing a member of $\Sigma_{k} \setminus \Sigma_{k+1}$ for every $k$ by a
uniform construction from closed-form sets.

\begin{definition}\label{def:partition}
Let $e_{n} = 2^{2^{n}}$ for $n \geq 1$. For $k \geq 1$ and $t \in \{0, \dots,
k-1\}$ set
\[
  E_{t}^{(k)} = \{ e_{n} : n \equiv t \!\!\pmod k \},
  \qquad
  A_{k} = E_{0}^{(k)} + \cdots + E_{k-1}^{(k)}.
\]
\end{definition}

\begin{theorem}\label{thm:main}
Let $k \geq 1$, and let $q_{0}, \dots, q_{k-1} \in \Nstar$ be free ultrafilters
with $E_{t}^{(k)} \in q_{t}$ for each $t$. Then
\[
  p := q_{0} + \cdots + q_{k-1} \in \Sigma_{k} \setminus \Sigma_{k+1}.
\]
\end{theorem}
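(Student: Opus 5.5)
The proof has two halves: membership $p \in \Sigma_{k}$, and non-membership $p \notin \Sigma_{k+1}$.

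\emph{Membership in $\Sigma_{k}$.} Argue by induction on $k$ that any sum $q_{0} + \cdots + q_{k-1}$ of free ultrafilters lies in $\Sigma_{k}$. For $k=1$ this is $\Sigma_{1} = \Nstar$. For the inductive step, $q_{1} + \cdots + q_{k-1} \in \Sigma_{k-1}$ by hypothesis, so $p = q_{0} + (q_{1} + \cdots + q_{k-1}) \in \Nstar + \Sigma_{k-1} \subseteq \Sigma_{k}$. One should check that $\Nstar$ is closed under sums in $\bN$, which is standard.

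\emph{Non-membership in $\Sigma_{k+1}$.} First, $A_{k} \in p$: inductively, $E_{0}^{(k)} + \cdots + E_{k-1}^{(k)} \in q_{0} + \cdots + q_{k-1}$ because $X + Y \in q + r$ whenever $X \in q$ and $Y \in r$. Then apply the master lemma announced in the abstract with $j = k$ and $F_{t+1} = E_{t}^{(k)}$; these have pairwise disjoint index sets (distinct residues mod $k$). It gives $p \notin \Sigma_{k+1}$.

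So the real work is the master lemma: if $F_{1} + \cdots + F_{j} \in s \in \Nstar$, with the $F_{i}$ subsequences of $\{e_{n}\}$ on pairwise disjoint index sets, then $s \notin \Sigma_{j+1}$. I would prove it by induction on $j$, with $j=1$ being Lemma~\ref{lem:firstgap} ($\{e_{n}\}$ is Sidon). Since $\Sigma_{j+1}$ is closed, it suffices to show $\overline{B} \cap (\Nstar + \Sigma_{j}) = \emptyset$ for $B = F_{1} + \cdots + F_{j}$.

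Suppose instead $B \in u + v$ with $u \in \Nstar$ and $v \in \Sigma_{j}$. Then $\{x : -x + B \in v\} \in u$, so there are infinitely many $x$ with $(-x + B) \in v$. Taking two such $x < x'$ and setting $d = x' - x$, we get $B \cap (B - d)$, up to translation, in $v$. The key rigidity claim is that for fixed $d \neq 0$, $B \cap (B - d)$ is covered by finitely many sets, each a translate of a finite set or a translate of a sum of at most $j-1$ of the $F_{i}$ restricted to disjoint index sets. This uses binary expansions: elements of $B$ are sums of $j$ distinct $e_{n}$'s, and $e_{n} > \prod_{m<n} e_{m}$. The largest index in $b$ and $b + d$, once it exceeds the bit length of $d$, must coincide, hence cancel within its own $F_{i}$, leaving a sum of $j-1$ terms shifted by a bounded amount. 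A translate of such a set belonging to $v$ means some translate $v'$ of $v$ contains a $(j-1)$-fold sum. Translates of free ultrafilters stay in $\Sigma_{j}$ since $\Sigma_{j}$ is an ideal, but the inductive hypothesis forbids a $(j-1)$-fold sum set from lying in a member of $\Sigma_{j}$. Finite pieces are excluded since $v$ is free.

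The main obstacle is making the cancellation claim precise. I would encode elements of $B$ by index sets $\{n_{1} > \cdots > n_{j}\}$ and compare binary expansions of $b$ and $b+d$. Carries from adding $d$ affect only bits below about $\log d$ plus a bounded amount, and the doubly exponential gaps rule out collisions between top terms from different $F_{i}$. One must also track that the remaining lower terms may partly be absorbed into the finite "small" part; I would handle this by splitting on how many indices exceed a threshold $N(d)$, yielding finitely many sum sets of fewer summands plus finite translates.
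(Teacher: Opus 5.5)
Your reduction of the theorem is the paper's: $p \in \Sigma_{k}$ because it is a $k$-fold sum, $A_{k} \in p$ by the sum-of-members fact, and then the Master Lemma with $j = k$. You also prove the Master Lemma on the paper's skeleton: a point $r + v$ with $v \in \Sigma_{j}$, two shifts $x < x'$, and a finite cover of the shift-intersection by translates of shorter sub-sums.

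You depart from the paper in two places inside the Master Lemma. The first is the cover. The paper uses the bound $|\delta| \geq e_{M-1}(e_{M-1} - (2j-1))$ and peels off one cancelling top index at a time. Your carry argument is correct once made precise, and it is sharper. The $1$-bits of an element of $B$ sit at positions $2^{n}$, and no two of these are adjacent, so adding $d < 2^{L}$ changes no bit at a position $\geq L+2$. Hence every index with $2^{n} \geq L+2$ is shared, in the same coordinate, by $b$ and $b+d$, all in one step; this is your threshold $N(d)$. What the argument uses is this bit structure, not the inequality $e_{n} > \prod_{m<n} e_{m}$. It is tied to powers of two, whereas the paper's estimate works for any sequence with $e_{n+1}/e_{n} \to \infty$ and the properties of Lemma~\ref{lem:rigid}. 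The second departure: the paper builds integer translates into the hypothesis (separated systems), so a covering translate is again admissible and $P(i)$ applies to $v$ itself. You keep the $F_{i}$ untranslated and move the translation onto the ultrafilter instead.

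That transfer is where your justification is incomplete. A covering translate is $Y = c + S$ with $c = \sum_{t \in D} e_{n_{t}} - x$, and you do not control the sign of $c$.

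If $c \leq 0$, then $S \in |c| + v$, and $|c| + v \in \Sigma_{j}$ because $\Sigma_{j}$ is a left ideal of $\bN$ (see the proof of Proposition~\ref{prop:two-sided}). If $c > 0$, you need $v' \in \Sigma_{j}$, where $v = c + v'$ and $S \in v'$. That is a cancellation property, and "$\Sigma_{j}$ is an ideal" does not give it.

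It does hold. For $j = 1$, $v'$ is free because $v$ is. For $j \geq 2$, take $X \in v'$; then $c + X \in v$, so by Lemma~\ref{lem:membership} $c + X \in r + w$ with $r \in \Nstar$ and $w \in \Sigma_{j-1}$. Since $c + \N \in r$, you can write $r = c + r'$ with $r' \in \Nstar$. Then $c + X \in c + (r' + w)$, hence $X \in r' + w$, which gives $v' \in \overline{\Nstar + \Sigma_{j-1}} = \Sigma_{j}$.

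Alternatively, the translation constants cancel in $b' - b$, so your carry argument applies verbatim to the paper's translated systems. You could adopt that formulation and skip the transfer entirely.

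Finally, run the induction as strong induction, since the surviving sub-sums can have any length $i \leq j-1$, not only $j-1$.
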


The witnesses are ultrafilters, so their existence uses the ultrafilter lemma, a weak form of the axiom of choice;
what is explicit is the set $A_{k}$, whose closure meets $\Sigma_{k}$ but not
$\Sigma_{k+1}$. The lower bound is immediate, as $p$ is a $k$-fold sum of free ultrafilters and
$\Nstar + \cdots + \Nstar \subseteq \Sigma_{k}$ (Proposition~\ref{prop:two-sided-k}). The upper bound is a consequence
of a single inductive lemma. Call a finite collection $F_{1}, \dots, F_{j}$ of
subsets of $\Z$ a \emph{separated system} if each $F_{i} = \{ e_{n} + c_{i} : n
\in S_{i} \}$ is an integer translate of an infinite subsequence of $\{e_{n}\}$
and the index sets $S_{i}$ are pairwise disjoint. (Allowing translates costs
nothing and is needed for the induction, since a shift-intersection of a
separated-system sum is covered by translates of sub-sums; see
Section~\ref{sec:shift}.)
The sets $E_{0}^{(k)}, \dots, E_{k-1}^{(k)}$ form a separated system of size $k$
with all $c_{i} = 0$. The heart of the paper is:

\begin{theorem}[Master Lemma]\label{thm:master}
Let $F_{1}, \dots, F_{j}$ be a separated system $(j \geq 1)$. If $(F_{1} + \cdots
+ F_{j}) \cap \N \in s$ for a free ultrafilter $s$, then $s \notin \Sigma_{j+1}$.
\end{theorem}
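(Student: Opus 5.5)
Induction on $j$, driven by a standard membership criterion for $\Sigma_{j+1}$. First I will record the criterion. If $s \in \overline{\Nstar + \Sigma_{j}}$ and $B \in s$, then $\overline{B}$ meets $\Nstar + \Sigma_{j}$. So there are $u \in \Nstar$ and $v \in \Sigma_{j}$ with $B \in u + v$, that is, $\{x : -x + B \in v\} \in u$. Since $u$ is free, this set is infinite, so there are infinitely many $x \in \N$ with $-x + B \in v$. Fixing any two such $x < y$ and setting $d = y - x$, we get $(B - x) \cap (B - y) \in v$. Hence $B \cap (B - d)$, shifted by $-x$, belongs to an element of $\Sigma_{j}$. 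Because $\Sigma_{j}$ is an ideal and translation invariant (translates of points of $\Sigma_j$ by integers, where defined, stay in $\Sigma_j$), the upshot is this: \emph{if $s \in \Sigma_{j+1}$ and $B \in s$, then for some $d \geq 1$ and some integer $x$, the set $(B \cap (B-d)) - x$, intersected with $\N$, lies in a point of $\Sigma_{j}$.} I will use this with $B = (F_{1} + \cdots + F_{j}) \cap \N$.

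\textbf{Base case $j = 1$.} Here $B$ is a translate of a subsequence of $\{e_{n}\}$. Since $e_{n+1} - e_{n} \to \infty$ strictly increasingly, the set is Sidon-like, and $B \cap (B - d)$ is finite for $d \neq 0$. A finite set cannot belong to a free ultrafilter, which contradicts the criterion. This is essentially Lemma~\ref{lem:firstgap} applied to a translate.

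\textbf{Inductive step (the rigidity).} Fix $d \geq 1$ and look at $z \in B \cap (B-d)$, so that
\[ z = \sum_{i} (e_{n_{i}} + c_{i}) \quad\text{and}\quad z + d = \sum_{i}(e_{m_{i}} + c_{i}), \]
with $n_{i}, m_{i} \in S_{i}$. This gives $\sum_i e_{m_i} - \sum_i e_{n_i} = d$. The key claim is that, once the largest index $N = \max_i \max(n_{i}, m_{i})$ exceeds a bound depending only on $d$ and $j$, the largest index must occur on both sides in the same coordinate. That is, there is an $i_{0}$ with $n_{i_{0}} = m_{i_{0}} = N$.

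To prove the claim, note that $e_{N} = e_{N-1}^{2}$ dominates every sum of at most $j$ terms $e_{n}$ with $n < N$, once $N$ is large. If $e_{N}$ appears a different number of times on the two sides, then the difference has absolute value at least about $e_{N} - j e_{N-1}$, which exceeds $d$. So the multiplicities of $e_{N}$ agree. By disjointness of the $S_{i}$, index $N$ belongs to a unique $S_{i_{0}}$, so it can only appear in coordinate $i_{0}$, and it occurs at most once per side. Hence it appears in coordinate $i_{0}$ on both sides.

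Consequently, $B \cap (B - d)$ is covered by a finite set together with the finitely many sets
\[ e_{n} + c_{i_0} + \bigl(B_{\hat{i}_{0}} \cap (B_{\hat{i}_{0}} - d)\bigr), \]
where $B_{\hat{i}_{0}} = \sum_{i \neq i_{0}} F_{i}$ is the sum of the remaining $j-1$ sets. As $n$ ranges over $S_{i_0}$ this is not yet a finite cover, so I will regroup. For each $i_{0}$, the part of $B\cap(B-d)$ where $i_0$ carries the top index is contained in
\[ \Bigl(F_{i_0} + \sum_{i\ne i_0}F_i'\Bigr), \]
where the coordinates $i \neq i_{0}$ satisfy the $(j-1)$-fold version of the equation, namely $\sum_{i\neq i_0}(e_{m_i}-e_{n_i}) = d$. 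Iterating the same cancellation in that smaller equation peels off further coordinates. After $j$ steps there would be an empty sum equal to $d \neq 0$, a contradiction. So at some stage the top index is small, and that coordinate takes one of finitely many values on both sides.

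The net effect is that $B\cap(B-d)$ is contained in a finite union of sets of the form
\[ \text{(integer)} + \sum_{i\in T} F_i \quad\text{with } |T| \le j-1, \]
which is again a finite union of translates of separated-system sums of size at most $j-1$. An ultrafilter containing a finite union contains one piece. Shifting by $-x$ then gives a point of $\Sigma_{j}$ containing a translate of a separated sum of size $j' \le j-1$. By the induction hypothesis (applied to $j'$, using $\Sigma_{j} \subseteq \Sigma_{j'+1}$), that point is not in $\Sigma_{j'+1}$, a contradiction.

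The main obstacle is the bookkeeping in this descent. The delicate step is showing that the indices which fail to be large are confined to finitely many values, uniformly given $d$. I would handle it by ordering all $2j$ indices and applying the domination inequality $e_{N} > j\,e_{N-1} + d$ repeatedly from the top, so that each step either cancels a top index within its own subsequence or bounds all remaining indices by a constant $N_{0}(d,j)$.
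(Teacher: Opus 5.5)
Your proposal is correct and follows the paper's proof essentially step for step. The membership criterion gives $v\in\Sigma_j$ containing the shift-intersection $(B\cap(B-d))-x$; the top index then cancels within its own subsequence, coordinates are peeled until all surviving indices are bounded, and the induction hypothesis applied to one piece of the finite cover contradicts $v\in\Sigma_j\subseteq\Sigma_{j'+1}$. Two small points: the appeal to translation invariance of $\Sigma_j$ is unnecessary, since $v$ itself contains the shifted set, and the pieces with $T=\emptyset$ are singletons, which should be discarded using freeness of $v$ rather than the induction hypothesis.
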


Theorem~\ref{thm:main} follows at once: $A_{k} \in p$ by
Lemma~\ref{lem:summembers}, and $A_{k}$ is a sum over a separated system of size
$k$, so Theorem~\ref{thm:master} gives $p \notin \Sigma_{k+1}$, while $p \in
\Sigma_{k}$ trivially.

The mechanism never uses that an ultrafilter is an actual iterated sum, only that
it lies in a given level of the filtration; this is what allows the induction to
run inside the cumulative filtration, where the levels are closures, rather than
in the formally simpler filtration of pure $k$-fold-sum closures. As a
consequence the same points also occupy the gaps of that \emph{pure}
filtration $\Pi_{k} = \overline{\Nstar + \cdots + \Nstar}$ ($k$ summands), since
$\Pi_{k} \subseteq \Sigma_{k}$; see Section~\ref{sec:remarks}.

Section~\ref{sec:prelim} fixes notation and the rigidity facts.
Section~\ref{sec:shift} proves the shift-intersection lemma: every shift-intersection of a separated-system sum is covered, up to a finite
set, by translates of sub-sums of strictly fewer pieces. Section~\ref{sec:master} proves the Master Lemma by induction,
Section~\ref{sec:remarks} comments on the pure filtration, the rigidity and the
group $\Z$, and Section~\ref{sec:bottom} records what is known about the bottom
$\Sigma_{\infty} = \bigcap_{k}\Sigma_{k}$ of the filtration.

\subsection*{Related work}
The combinatorial tool underlying our argument, controlling an ultrafilter sum
through the leftward shifts $-x + B$ of its members and the finite-intersection
structure of $\{-a + B\}$, belongs to a well-developed circle of ideas around
\emph{finite embeddability} of sets and ultrafilters, developed by Di Nasso
\cite{DiNasso2014}, Blass and Di Nasso \cite{BlassDiNasso2015}, and others, and
to ultrafilter methods in combinatorial number theory more broadly (see for
example Beiglb\"ock \cite{Beiglbock2011}). In particular, Blass and Di Nasso
\cite[Theorem~11]{BlassDiNasso2015} characterize the closure of a principal right ideal
$\{\mathcal{U} + \mathcal{W} : \mathcal{W} \in \bN\}$ in these terms, and their
Example~15 already features the set $\{2^{m}\}$ in a shift construction. That
work analyzes a different object, namely the single-generator cones of the
finite embeddability order, and does not address the iterated depth filtration or its
gaps. The same device underlies the sparseness and relative-sparseness methods
of Filali, Lutsenko and Protasov \cite{FilaliLutsenkoProtasov2009} and of
Lutsenko and Protasov \cite{LutsenkoProtasov2011}, whose witnesses for
groups are built inductively rather than given in closed form. We therefore make
no claim of novelty for the shift-intersection technique itself. Our
contribution is to apply this established toolkit to the depth filtration of
$\N$, giving at every gap an explicit set whose closure meets that level but not
the next.

\section{Preliminaries}\label{sec:prelim}

We adopt the conventions of Hindman and Strauss~\cite{HindmanStrauss1998}.  The set
$\N = \{1, 2, 3, \ldots\}$ is given the discrete topology, and $\bN$ denotes the
set of ultrafilters on $\N$.  Each $n \in \N$ is identified with the principal
ultrafilter $\dot{n} = \{A \subseteq \N : n \in A\}$, so that $\N \subseteq \bN$.
The set $\Nstar := \bN \setminus \N$ is the set of free (non-principal) ultrafilters.
For $A \subseteq \N$, the set $\overline{A} = \{p \in \bN : A \in p\}$ is clopen
in $\bN$, and $\{\overline{A} : A \subseteq \N\}$ is a basis for the topology.

The operation $+$ on $\N$ extends to $\bN$ via
\[
  A \in p + q \iff \{n \in \N : -n + A \in q\} \in p,
\]
where $-n + A = \{m \in \N : n + m \in A\}$.  Under this operation $(\bN, +)$ is a
compact right topological semigroup: for each fixed $q \in \bN$, the map $\rho_{q}
: p \mapsto p + q$ is continuous.  Left multiplication $\lambda_{r} : q \mapsto r +
q$ is in general not continuous when $r \in \Nstar$.  The set $\Nstar$ is a closed
sub-semigroup of $\bN$.

By Ellis's lemma, every compact Hausdorff right topological semigroup has an idempotent
\cite[Theorem~2.5]{HindmanStrauss1998}; we
write $\E = \{e \in \Nstar : e + e = e\}$ for the set of idempotents in $\Nstar$.
The semigroup $\bN$ has a smallest two-sided ideal $\K$, which is the union of all
minimal right ideals (equivalently, all minimal left ideals)
\cite[Theorem~2.8]{HindmanStrauss1998}.  Both $\E$ and $\K$
are nonempty subsets of $\Nstar$, since $\Nstar$ is an ideal of $\bN$
\cite[Corollary~4.33]{HindmanStrauss1998}.

\subsection{The depth filtration and its basic properties}

The following is the analogue for $(\N,+)$ of a definition of Protasov and
Protasova for groups \cite[\S 4]{ProtasovProtasova2017}.

\begin{definition}\label{def:filtration}
Define a descending sequence of subsets of $\bN$ by
\[
  \Sigma_{0} := \bN, \qquad \Sigma_{1} := \Nstar,
\]
and inductively for $k \geq 1$,
\[
  \Sigma_{k+1} := \overline{\Nstar + \Sigma_{k}},
\]
where the closure is taken in $\bN$.  Set $\Sigma_{\infty} := \bigcap_{k=0}^{\infty}
\Sigma_{k}$.
\end{definition}

In the notation of~\cite[Section~4]{ProtasovProtasova2017}, $\Sigma_{k}$
corresponds to the ideal $I_{G,\,k-1}$, where $I_{G,0} := G^{*}$ and $I_{G,n+1}
:= \overline{G^{*}I_{G,n}}$ for a discrete group $G$ (written multiplicatively).
Since $\N$ is not a group, we record the structural facts we need with
self-contained proofs.

\begin{remark}[notation]\label{rem:notation}
Since only the additive structure is considered here, we write $\Sigma_{k}$
without decoration.  When the additive and multiplicative filtrations of $\bN$
are treated together one writes $\Sigma_{k}^{+}$ and $\Sigma_{k}^{\times}$
respectively; in that notation every $\Sigma_{k}$ below is $\Sigma_{k}^{+}$.
We also record that $\Nstar$ denotes the Stone--\v{C}ech remainder
$\bN \setminus \N$, with the star written on the \emph{right}; this is not to
be confused with the nonstandard extension ${}^{*}\N$ of the natural numbers,
where the star is written on the left.
\end{remark}

Figure~\ref{fig:nested} illustrates Definition~\ref{def:filtration}.  The
levels are nested regions: the outermost is all of $\bN$, each step inward is a
more decomposable stratum, and the common center is $\Sigma_{\infty}$.  The
marked points record where the objects of this paper sit, in particular
witnesses of Theorem~\ref{thm:main} in the first two gaps, starting with the
outermost \emph{free} ring $\Sigma_{1} \setminus \Sigma_{2}$.

\begin{figure}[ht]
\centering
\begin{tikzpicture}[scale=0.95]
  \draw[black!55, fill=black!2]  (0,0) ellipse (5.4cm and 3.3cm);
  \draw[black!55, fill=black!5]  (0,0) ellipse (4.5cm and 2.75cm);
  \draw[black!55, fill=black!9]  (0,0) ellipse (3.5cm and 2.14cm);
  \draw[black!55, fill=black!13] (0,0) ellipse (2.55cm and 1.56cm);
  \draw[black!55, fill=black!20] (0,0) ellipse (1.5cm and 0.92cm);

  \node at (0, 2.98) {\footnotesize $\Sigma_{0} = \bN$};
  \node at (0, 2.42) {\footnotesize $\Sigma_{1} = \Nstar$};
  \node at (0, 1.83) {\footnotesize $\Sigma_{2}$};
  \node at (0, 1.25) {\footnotesize $\Sigma_{3}$};
  \node at (0, -0.02) {\footnotesize $\Sigma_{\infty}$};

  % principal point: outermost shell only
  \filldraw[black] (-4.95,0.0) circle (1.5pt);
  \node[left=1pt, align=right, fill=white, inner sep=1pt] at (-4.95,0.0)
    {\scriptsize $\dot n$\\[-2pt]\scriptsize $\Sigma_{0}\!\setminus\!\Sigma_{1}$};

  % the paper's witness: top gap
  \filldraw[black] (4.02,0.0) circle (1.7pt);
  \node[right=1pt, align=left, fill=white, inner sep=1pt] at (4.08,0.0)
    {\scriptsize witness on $\{2^{2^{n}}\}$\\[-2pt]\scriptsize $\Sigma_{1}\!\setminus\!\Sigma_{2}$};

  % a witness in the second gap
  \filldraw[black] (-3.02,0.0) circle (1.5pt);
  \node[below=2pt, align=center, fill=white, inner sep=1pt] at (-3.02,-0.12)
    {\scriptsize $q_{0}+q_{1}$\\[-2pt]\scriptsize $\Sigma_{2}\!\setminus\!\Sigma_{3}$};

  % idempotent at the center
  \filldraw[black] (0.0,-0.55) circle (1.7pt);
  \node[below=1pt] at (0.0,-0.66) {\scriptsize $e = e + e$};
  \node at (0, 0.56) {\footnotesize $\vdots$};
\end{tikzpicture}
\caption{The levels of Definition~\ref{def:filtration} as nested regions.  Each level is
nested inside the previous one.  A principal ultrafilter $\dot n$ lies in the
outermost shell only; an idempotent $e = e+e$ lies in every level, hence in
$\Sigma_{\infty}$; a sum of two free ultrafilters lies in $\Sigma_{2}$.  By
Theorem~\ref{thm:main}, a free ultrafilter containing $E_{0}^{(1)} = \{2^{2^{n}}\}$
occupies the outermost free ring $\Sigma_{1} \setminus \Sigma_{2}$, and $q_{0} +
q_{1}$ with $E_{t}^{(2)} \in q_{t}$ lies in $\Sigma_{2} \setminus \Sigma_{3}$.}
\label{fig:nested}
\end{figure}

\begin{lemma}\label{lem:descending}
The sequence $(\Sigma_{k})_{k \geq 0}$ is descending: $\Sigma_{k+1} \subseteq
\Sigma_{k}$ for every $k \geq 0$.
\end{lemma}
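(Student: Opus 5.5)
The proof is an induction on $k$, with the two base inclusions checked directly. First, $\Sigma_{1} = \Nstar \subseteq \bN = \Sigma_{0}$ holds by definition.

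Second, I show $\Sigma_{2} \subseteq \Sigma_{1}$. Since $\Nstar$ is an ideal of $\bN$ \cite[Corollary~4.33]{HindmanStrauss1998}, in particular a subsemigroup, we have $\Nstar + \Nstar \subseteq \Nstar$. Moreover $\Nstar$ is closed in $\bN$, as the complement of the open set $\N$ of isolated points. Taking closures gives $\Sigma_{2} = \overline{\Nstar + \Sigma_{1}} = \overline{\Nstar + \Nstar} \subseteq \Nstar = \Sigma_{1}$.

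For the inductive step, assume $\Sigma_{k+1} \subseteq \Sigma_{k}$ for some $k \geq 1$. Then, elementwise,
\[
  \Nstar + \Sigma_{k+1} \subseteq \Nstar + \Sigma_{k}
\]
as subsets of $\bN$: every $r + x$ with $r \in \Nstar$ and $x \in \Sigma_{k+1}$ also has $x \in \Sigma_{k}$. Closure is monotone, so
\[
  \Sigma_{k+2} = \overline{\Nstar + \Sigma_{k+1}} \subseteq \overline{\Nstar + \Sigma_{k}} = \Sigma_{k+1}.
\]
This completes the induction. No continuity of left translation is needed, since we only compare sets of sums and then take closures.

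There is no real obstacle. The only point that needs care is the base case $\Sigma_{2} \subseteq \Sigma_{1}$, which uses two facts: $\Nstar$ is closed under $+$, and $\Nstar$ is closed in $\bN$. The first is the cited ideal property. If a self-contained argument is preferred, it can be checked directly. Let $p, q \in \Nstar$ and let $F \subseteq \N$ be finite. For each $n$, the set $-n + F$ is finite, so it is not in $q$. Hence $\{n : -n + F \in q\} = \emptyset \notin p$, so $F \notin p + q$. Since $p + q$ contains no finite set, it is not principal, i.e.\ $p + q \in \Nstar$.
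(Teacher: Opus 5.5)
Your proof is correct and follows the paper's argument essentially verbatim. You check the two base cases $\Sigma_{1} \subseteq \Sigma_{0}$ and $\Sigma_{2} \subseteq \Sigma_{1}$ (via $\Nstar + \Nstar \subseteq \Nstar$ and closedness of $\Nstar$), then use monotonicity of $X \mapsto \overline{\Nstar + X}$ for the inductive step. Your more careful indexing of that step (starting from $k \geq 1$ so both closures are defined) and your direct check that $p+q$ contains no finite set are small additions of rigor, not a different route.
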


\begin{proof}
By induction on $k$.  For $k = 0$: $\Sigma_{1} = \Nstar \subseteq \bN =
\Sigma_{0}$.  For $k = 1$: $\Nstar + \Nstar \subseteq \Nstar$ since $\Nstar$ is
a sub-semigroup of $\bN$, and $\Nstar$ is closed, so $\Sigma_{2} =
\overline{\Nstar + \Nstar} \subseteq \overline{\Nstar} = \Nstar = \Sigma_{1}$.
For the inductive step, assume $\Sigma_{k} \subseteq \Sigma_{k-1}$.  Then
$\Nstar + \Sigma_{k} \subseteq \Nstar + \Sigma_{k-1}$, and taking closures
gives $\Sigma_{k+1} \subseteq \Sigma_{k}$.
\end{proof}

\begin{lemma}\label{lem:closed}
For every $k \geq 0$, the set $\Sigma_{k}$ is closed in $\bN$.
\end{lemma}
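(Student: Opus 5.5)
This is a case check on $k$, using only the definitions and elementary topology of $\bN$. For $k = 0$ we have $\Sigma_{0} = \bN$, which is trivially closed in itself. For $k = 1$ we need $\Sigma_{1} = \Nstar = \bN \setminus \N$ to be closed, that is, $\N$ to be open in $\bN$. Each principal ultrafilter $\dot n$ has the clopen neighbourhood $\overline{\{n\}}$. An ultrafilter containing $\{n\}$ must equal $\dot n$, so $\overline{\{n\}} = \{\dot n\}$. Hence every point of $\N$ is isolated, $\N = \bigcup_{n} \overline{\{n\}}$ is open, and its complement $\Nstar$ is closed. This also justifies the assertion already used in Lemma~\ref{lem:descending} that $\Nstar$ is closed.

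For $k \geq 2$ we write $k = m+1$ with $m \geq 1$. Then $\Sigma_{k} = \overline{\Nstar + \Sigma_{m}}$ is by definition the closure in $\bN$ of some subset, and the closure of any set is closed. This needs no induction on $k$ and no continuity property of $+$. The only real point is that the definition takes the closure at each stage. Without it, $\Nstar + \Sigma_{m}$ need not be closed, since left translation $\lambda_{r}$ is not continuous for $r \in \Nstar$.

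There is no genuine obstacle. The one step that needs justification rather than bare definition is the case $k = 1$, which uses that the basic clopen set $\overline{\{n\}}$ is the singleton $\{\dot n\}$.
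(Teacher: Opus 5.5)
Your proof is correct and is the paper's argument: $\Sigma_{0}$ and $\Sigma_{1}$ are closed by inspection, and for $k \geq 2$ the level $\Sigma_{k}$ is defined as a closure. You add one detail the paper leaves implicit, namely that $\Nstar$ is closed because $\N$ is open, since each basic set $\overline{\{n\}} = \{\dot n\}$ is a singleton.
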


\begin{proof}
$\Sigma_{0}$ and $\Sigma_{1}$ are closed by inspection.  For $k \geq 1$,
$\Sigma_{k+1}$ is closed by definition.
\end{proof}

\begin{lemma}\label{lem:right-ideal}
For every $k \geq 1$, $\Sigma_{k}$ is a right ideal of $\Nstar$: $\Sigma_{k} +
\Nstar \subseteq \Sigma_{k}$.
\end{lemma}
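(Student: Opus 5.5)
The plan is to argue by induction on $k$, using only that $\rho_{q}$ is continuous for each $q$ and that $\Nstar$ is a closed sub-semigroup. For $k = 1$ the claim is $\Nstar + \Nstar \subseteq \Nstar$, which holds because $\Nstar$ is a sub-semigroup. (It is in fact an ideal of $\bN$, but we only need closure under sums of free ultrafilters.)

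For the inductive step, assume $\Sigma_{k} + \Nstar \subseteq \Sigma_{k}$ and fix $q \in \Nstar$. We want $\Sigma_{k+1} + q \subseteq \Sigma_{k+1}$. We first treat the dense part. Take $r \in \Nstar$ and $s \in \Sigma_{k}$. Associativity of $+$ in $\bN$ gives
\[
(r + s) + q = r + (s + q).
\]
By the inductive hypothesis $s + q \in \Sigma_{k}$, so $(r+s)+q \in \Nstar + \Sigma_{k} \subseteq \Sigma_{k+1}$. Hence
\[
\rho_{q}\bigl(\Nstar + \Sigma_{k}\bigr) \subseteq \Sigma_{k+1}.
\]

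Next we pass to the closure. The map $\rho_{q}$ is continuous, so
\[
\rho_{q}\bigl(\overline{\Nstar + \Sigma_{k}}\bigr) \subseteq \overline{\rho_{q}(\Nstar + \Sigma_{k})} \subseteq \overline{\Sigma_{k+1}} = \Sigma_{k+1},
\]
where the last equality uses Lemma~\ref{lem:closed}. This gives $\Sigma_{k+1} + q \subseteq \Sigma_{k+1}$ and completes the induction.

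There is no serious obstacle here. The one point needing care is that only right translation is continuous. That is exactly why the closure step works for right ideals: we apply $\rho_{q}$ to the closure. The analogous argument for left ideals would require $\lambda_{r}$ to be continuous, which fails for $r \in \Nstar$, so the left-ideal property would have to be established separately.
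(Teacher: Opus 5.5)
Your proof is correct and follows the paper's argument: induction on $k$, the identity $(r+s)+q = r+(s+q)$ combined with the inductive hypothesis on $\Nstar + \Sigma_{k}$, and then continuity of $\rho_{q}$ to pass to the closure. The only difference is cosmetic: you use $\rho_{q}(\overline{X}) \subseteq \overline{\rho_{q}(X)}$, where the paper expresses the same continuity step with a convergent net $s_{\alpha} \to s$.
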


\begin{proof}
By induction on $k$.  The base case $k=1$ is trivial since $\Sigma_{1} = \Nstar$.
Assume $\Sigma_{k}$ is a right ideal.  Let $s \in \Sigma_{k+1}$ and $q \in
\Nstar$.  Choose a net $(s_{\alpha})$ in $\Nstar + \Sigma_{k}$ with $s_{\alpha}
\to s$, writing $s_{\alpha} = r_{\alpha} + p_{\alpha}$ with $r_{\alpha} \in
\Nstar$ and $p_{\alpha} \in \Sigma_{k}$.  By right-continuity,
$s_{\alpha} + q \to s + q$.  By associativity,
\(
  s_{\alpha} + q = r_{\alpha} + (p_{\alpha} + q),
\)
and $p_{\alpha} + q \in \Sigma_{k}$ by the inductive hypothesis.  Hence $s + q
\in \overline{\Nstar + \Sigma_{k}} = \Sigma_{k+1}$.
\end{proof}

\begin{proposition}\label{prop:two-sided-k}
For every $k \geq 1$, $\Sigma_{k}$ is a closed two-sided ideal of $\Nstar$.
Moreover $u_{1} + \cdots + u_{k} \in \Sigma_{k}$ for all $k \geq 1$ and $u_{1},
\dots, u_{k} \in \Nstar$.
\end{proposition}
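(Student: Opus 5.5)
We must show three things for each $k \geq 1$: that $\Sigma_{k}$ is closed, that it is a right ideal, and that it is a left ideal of $\Nstar$. The first two are Lemma~\ref{lem:closed} and Lemma~\ref{lem:right-ideal}, so only the left-ideal property $\Nstar + \Sigma_{k} \subseteq \Sigma_{k}$ and the statement about $k$-fold sums remain.

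\textbf{Left ideal.} For $k = 1$ this holds because $\Nstar$ is a sub-semigroup. For $k \geq 2$ we have
\[
  \Nstar + \Sigma_{k} \subseteq \overline{\Nstar + \Sigma_{k}} = \Sigma_{k+1} \subseteq \Sigma_{k},
\]
using Lemma~\ref{lem:descending}. The same chain works for $k = 1$, since $\Sigma_{2} \subseteq \Sigma_{1}$. No continuity of left translation is needed here, because $\Sigma_{k+1}$ is defined as the closure of exactly the set $\Nstar + \Sigma_{k}$. Combined with Lemmas~\ref{lem:closed} and~\ref{lem:right-ideal}, this shows $\Sigma_{k}$ is a closed two-sided ideal of $\Nstar$.

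\textbf{$k$-fold sums.} We argue by induction on $k$, proving $u_{1} + \cdots + u_{k} \in \Sigma_{k}$.
\begin{itemize}
  \item For $k = 1$, $u_{1} \in \Nstar = \Sigma_{1}$.
  \item Suppose $v := u_{2} + \cdots + u_{k+1} \in \Sigma_{k}$. By associativity,
  \[
    u_{1} + \cdots + u_{k+1} = u_{1} + v \in \Nstar + \Sigma_{k} \subseteq \Sigma_{k+1}.
  \]
\end{itemize}
The only point needing care is the bracketing. The recursion places the new free summand on the left, so we peel off $u_{1}$ rather than $u_{k+1}$. Associativity of $+$ on $\bN$ makes the unbracketed sum unambiguous.

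I expect no real obstacle. The potentially delicate step, closure under left addition, is automatic from the definition together with Lemma~\ref{lem:descending}; it does not require a net argument like the one in Lemma~\ref{lem:right-ideal}.
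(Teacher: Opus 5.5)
Your proposal is correct and follows the paper's proof essentially verbatim. Closedness and the right-ideal property are cited from Lemmas~\ref{lem:closed} and~\ref{lem:right-ideal}, the left-ideal property comes from the chain $\Nstar + \Sigma_{k} \subseteq \Sigma_{k+1} \subseteq \Sigma_{k}$ via Lemma~\ref{lem:descending}, and the $k$-fold-sum claim is proved by the same induction that peels off the leftmost summand $u_{1}$.
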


\begin{proof}
Closedness is Lemma~\ref{lem:closed}; the right-ideal property is
Lemma~\ref{lem:right-ideal}.  For the left ideal property: by
Lemma~\ref{lem:descending}, $\Sigma_{k} \supseteq \Sigma_{k+1} =
\overline{\Nstar + \Sigma_{k}} \supseteq \Nstar + \Sigma_{k}$.  For the last
claim induct on $k$: the case $k = 1$ is $\Sigma_{1} = \Nstar$, and $u_{1} +
(u_{2} + \cdots + u_{k}) \in \Nstar + \Sigma_{k-1} \subseteq \Sigma_{k}$.
\end{proof}

\begin{proposition}\label{prop:two-sided}
$\Sigma_{\infty}$ is a closed two-sided ideal of $\Nstar$, and $\overline{\K} \cup
\overline{\E} \subseteq \Sigma_{\infty}$.  Consequently $\overline{\K} \subseteq
\Sigma_{k}$ for every $k$.
\end{proposition}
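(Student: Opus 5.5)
We prove the three assertions in order: that $\Sigma_{\infty}$ is a closed two-sided ideal, that $\overline{\K} \cup \overline{\E} \subseteq \Sigma_{\infty}$, and the final consequence.

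\emph{$\Sigma_{\infty}$ is a closed two-sided ideal.} By Proposition~\ref{prop:two-sided-k}, each $\Sigma_{k}$ with $k \geq 1$ is a closed two-sided ideal of $\Nstar$. Since $\Sigma_{0} = \bN \supseteq \Sigma_{1}$ by Lemma~\ref{lem:descending}, we have $\Sigma_{\infty} = \bigcap_{k \geq 1} \Sigma_{k}$. An intersection of closed sets is closed. If $s \in \Sigma_{\infty}$ and $q \in \Nstar$, then $s + q$ and $q + s$ lie in every $\Sigma_{k}$, hence in $\Sigma_{\infty}$. Nonemptiness is not needed for the ideal property, and in any case it follows from the next step.

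\emph{Idempotents.} Let $e \in \E$. We show $e \in \Sigma_{k}$ for every $k$ by induction. The cases $k = 0, 1$ hold because $e \in \Nstar$. Suppose $e \in \Sigma_{k}$. Then $e = e + e \in \Nstar + \Sigma_{k} \subseteq \Sigma_{k+1}$. So $\E \subseteq \Sigma_{\infty}$, and $\overline{\E} \subseteq \Sigma_{\infty}$ because $\Sigma_{\infty}$ is closed. Equivalently, one could use the last clause of Proposition~\ref{prop:two-sided-k} with $u_{1} = \cdots = u_{k} = e$.

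\emph{Smallest ideal.} Let $x \in \K$. The smallest ideal is the union of the minimal left ideals, so $x$ lies in some minimal left ideal $L$. Since $L$ is a compact right topological semigroup, it contains an idempotent $e$ by Ellis's lemma, and $e \in \Nstar$ because $\K \subseteq \Nstar$. The left ideal generated by $e$ satisfies $\bN + e \subseteq L$, so by minimality $L = \bN + e$. Hence $x = y + e$ for some $y \in \bN$. We must then check that $y + e \in \Sigma_{\infty}$.
\begin{itemize}
\item If $y \in \Nstar$, this follows because $\Sigma_{\infty}$ is an ideal of $\Nstar$ and $e \in \Sigma_{\infty}$.
\item If $y = n \in \N$, note that $n + e \in \Nstar$. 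Moreover $n + e = (n + e) + e$, which lies in $\Nstar + \Sigma_{k}$ for every $k$ because $e \in \Sigma_{k}$.
\end{itemize}
Alternatively, use $\K = \K + \K$. Any $x \in \K$ lies in a group $x + \K + x$, so $x = x + e_x$ for an idempotent $e_x$ of $\K$, and $x + e_x \in \Nstar + \Sigma_{\infty} \subseteq \Sigma_{\infty}$. This route is cleaner, since it avoids treating principal $y$ separately. Finally, $\overline{\K} \subseteq \Sigma_{\infty}$ by closedness.

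\emph{Consequence.} We get $\overline{\K} \subseteq \Sigma_{\infty} \subseteq \Sigma_{k}$ for every $k$.

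The only step needing care is the $\K$ step: we must produce, for each $x \in \K$, a decomposition of $x$ as a sum of a free ultrafilter and an element of $\Sigma_{\infty}$. The cleanest way is the group-structure fact $x = x + e$ for an idempotent $e$ in the same maximal group, which is standard \cite[Theorem~1.64 / 2.8]{HindmanStrauss1998}.
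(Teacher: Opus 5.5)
Your proof is correct. The first two parts (intersecting the ideals $\Sigma_k$, and the induction $e = e + e \in \Nstar + \Sigma_k$) match the paper. Your treatment of $\K$ takes a genuinely different route.

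The paper never uses idempotents for the $\K$ step. It upgrades each $\Sigma_k$ to a two-sided ideal of all of $\bN$. To do this it checks that $n + p = p + n$ for $n \in \N$ and that $\lambda_n$ is continuous, so that $n + \Sigma_{k+1} \subseteq \overline{n + (\Nstar + \Sigma_k)} \subseteq \Sigma_{k+1}$. Then $\K \subseteq \Sigma_k$ follows at once because $\K$ is the smallest ideal of $\bN$.

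You instead reduce $\K$ to the idempotent case already proved. Each $x \in \K$ satisfies $x = x + e$ for an idempotent $e \in \K$, obtained either from the minimal left ideal $\bN + e$ or from the maximal group $x + \bN + x$. Hence $x \in \Nstar + \Sigma_\infty \subseteq \Sigma_\infty$, using only the ideal property over $\Nstar$.

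The trade-offs are these. Your route sidesteps the principal-ultrafilter bookkeeping entirely and gives the slightly sharper fact $\K \subseteq \Nstar + \E$. It does, however, lean on the structure theory of $\K$: compactness of minimal left ideals plus Ellis's lemma, or the decomposition of $\K$ into groups. The paper's route uses only the minimality of $\K$, and it yields the independent by-product that every $\Sigma_k$ is an ideal of $\bN$, not merely of $\Nstar$.

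One small simplification: the case split in your first route is unnecessary. From $x = y + e$ you already get $x + e = y + (e + e) = x$ for any $y \in \bN$, which is exactly your second route.
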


\begin{proof}
The ideal property follows from Proposition~\ref{prop:two-sided-k} by
intersection.  Every idempotent $e \in \Nstar$ satisfies $e = e + e$, so by
induction $e \in \Sigma_{k}$ for all $k$: if $e \in \Sigma_{k}$ then $e + e \in
\Nstar + \Sigma_{k} \subseteq \overline{\Nstar + \Sigma_{k}} = \Sigma_{k+1}$.
This gives $\overline{\E} \subseteq \Sigma_{\infty}$ by closedness.  (This is the
argument of \cite[Theorem~4.2(ii)]{ProtasovProtasova2017} for groups.)

For $\overline{\K} \subseteq \Sigma_{\infty}$: for $n \in \N$ and $p \in \bN$ we
have $p + n = n + p$, since each contains $A$ exactly when $-n + A \in p$; and
$\lambda_{n} \colon p \mapsto n + p$ is continuous, since
$\lambda_{n}^{-1}(\overline{A}) = \overline{-n + A}$.  As $n + (\Nstar +
\Sigma_{k}) = (n + \Nstar) + \Sigma_{k} \subseteq \Nstar + \Sigma_{k}$, because $n +
\Nstar \subseteq \Nstar$ \cite[Corollary~4.33]{HindmanStrauss1998},
continuity gives $n + \Sigma_{k+1} \subseteq \Sigma_{k+1}$; also $n + \Sigma_{1}
\subseteq \Nstar$.  Together with
$\Sigma_{k} + n = n + \Sigma_{k}$, Lemma~\ref{lem:right-ideal} and
Proposition~\ref{prop:two-sided-k}, and as $\bN = \N \cup \Nstar$, each
$\Sigma_{k}$, and so $\Sigma_{\infty}$, is a two-sided ideal of $\bN$, and
therefore contains the smallest ideal $\K$.  Closedness gives $\overline{\K}
\subseteq \Sigma_{\infty}$.
\end{proof}

Free ultrafilters contain only infinite sets. We record three more facts and
the rigidity of $\{e_{n}\}$.

\begin{lemma}[Cumulative membership criterion]\label{lem:membership}
For $j \geq 1$ and $p \in \bN$, if $p \in \Sigma_{j+1}$ then every $A \in p$
satisfies $A \in r + v$ for some $r \in \Nstar$ and $v \in \Sigma_{j}$.
\end{lemma}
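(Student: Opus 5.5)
This is a direct unwinding of the definition of closure in $\bN$. By Definition~\ref{def:filtration}, $\Sigma_{j+1} = \overline{\Nstar + \Sigma_{j}}$ for $j \geq 1$. The basic open neighbourhoods of $p$ are the clopen sets $\overline{A}$ with $A \in p$, as recorded in the preliminaries.

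So I would fix $p \in \Sigma_{j+1}$ and $A \in p$. Then $\overline{A}$ is an open set containing $p$. Since $p$ lies in the closure of $\Nstar + \Sigma_{j}$, this open set must meet $\Nstar + \Sigma_{j}$. Hence there is some point $r + v \in \overline{A}$ with $r \in \Nstar$ and $v \in \Sigma_{j}$. Membership in $\overline{A}$ means exactly that $A \in r + v$, which is the claim.

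I expect no real obstacle. The only points to get right are that the sets $\overline{A}$ form a basis, so every neighbourhood of a closure point meets the set, and that the index condition $j \geq 1$ makes the recursive clause of Definition~\ref{def:filtration} apply to $\Sigma_{j+1}$.
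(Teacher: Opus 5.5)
Your proposal is correct and matches the paper's proof essentially line for line. Like the paper, you take the clopen neighbourhood $\overline{A}$ of $p$ and observe that it must meet $\Nstar + \Sigma_{j}$ because $p$ lies in its closure, which produces $r + v$ with $A \in r + v$.
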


\begin{proof}
By definition $\Sigma_{j+1} = \overline{\Nstar + \Sigma_{j}}$. Let $A \in p$;
then $\overline{A}$ is a clopen neighborhood of $p$, and since $p$ lies in the
closure of $\Nstar + \Sigma_{j}$ this neighborhood meets it: there is $w \in
\Nstar + \Sigma_{j}$ with $A \in w$. Writing $w = r + v$ with $r \in \Nstar$ and
$v \in \Sigma_{j}$ gives the claim.
\end{proof}

\begin{lemma}[Sums of members, {\cite[Theorem~4.15 and Exercise~4.1.6]{HindmanStrauss1998}}]\label{lem:summembers}
If $u_{1}, \dots, u_{k} \in \Nstar$ and $B_{i} \in u_{i}$, then $B_{1} + \cdots +
B_{k} \in u_{1} + \cdots + u_{k}$.
\end{lemma}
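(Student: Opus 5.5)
The plan is to induct on $k$, with the inductive step resting on the definition of the sum in $\bN$ and the fact that free ultrafilters contain only infinite sets. By associativity, $u_{1} + \cdots + u_{k} = u_{1} + (u_{2} + \cdots + u_{k})$, so it is enough to prove the two-summand statement: if $u \in \Nstar$, $w \in \bN$, $B \in u$ and $C \in w$, then $B + C \in u + w$. Granting this, the inductive hypothesis gives $B_{2} + \cdots + B_{k} \in u_{2} + \cdots + u_{k}$. Applying the two-summand case with $C = B_{2} + \cdots + B_{k}$ then yields $B_{1} + \cdots + B_{k} \in u_{1} + \cdots + u_{k}$. The base case $k = 1$ is trivial.

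For the two-summand case, unwind the definition: $B + C \in u + w$ iff $\{n \in \N : -n + (B + C) \in w\} \in u$. For each $b \in B$, every $c \in C$ satisfies $b + c \in B + C$, so $C \subseteq -b + (B + C)$. Since $w$ is a filter and $C \in w$, this gives $-b + (B+C) \in w$. Hence $B \subseteq \{n : -n + (B+C) \in w\}$, and since $B \in u$ the larger set is also in $u$, as required. Note that this step does not actually use freeness of $u$ or $w$, so it holds for all ultrafilters.

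There is no real obstacle here. The only points to watch are that the sumset $B_{1} + \cdots + B_{k}$ is taken inside $\N$, which is automatic since all $B_{i} \subseteq \N$, and that associativity of $+$ on $\bN$ lets us bracket the sum as we like; both are standard (Hindman–Strauss Theorem 4.4). Alternatively, one can cite \cite[Theorem~4.15]{HindmanStrauss1998} directly for the two-summand case and then induct as above.
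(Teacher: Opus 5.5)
Your proof is correct: the two-summand step $B \subseteq \{n : -n + (B+C) \in w\}$ is the easy direction of Theorem~4.15 of Hindman--Strauss (with the constant family $C_x = C$), and the induction via associativity gives the $k$-fold version. The paper proves nothing beyond this citation, so your argument is essentially the one it relies on.
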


\begin{lemma}[First-gap mechanism]\label{lem:firstgap}
Call $S \subseteq \Z$ a \emph{Sidon set} if every nonzero integer has at most
one representation as a difference of two elements of $S$. Let $S \subseteq \N$
be an infinite Sidon set. If $r \in \Nstar$ with $S \in r$, then $r \notin \Sigma_{2}$, and hence
$r \notin \Sigma_{k}$ for every $k \geq 2$.
\end{lemma}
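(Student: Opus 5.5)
We argue by contradiction using the cumulative membership criterion (Lemma~\ref{lem:membership}) with $j=1$. Suppose $S \in r$ and $r \in \Sigma_{2}$. Then there are $u \in \Nstar$ and $v \in \Sigma_{1} = \Nstar$ with $S \in u + v$. By the definition of the operation, $X := \{ n \in \N : -n + S \in v \} \in u$, and $X$ is infinite since $u$ is free. We will show that $X$ has at most one element, which is the contradiction.

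The key step is a finite-intersection computation. Take two distinct elements $a < b$ of $X$. Then $-a + S$ and $-b + S$ both lie in $v$, so their intersection also lies in $v$ and is therefore infinite. But $m \in (-a + S) \cap (-b + S)$ means $a + m = s$ and $b + m = s'$ with $s, s' \in S$, so $s' - s = b - a \neq 0$. Since $S$ is Sidon, the nonzero difference $b - a$ has at most one representation $s' - s$. The pair $(s, s')$ is therefore uniquely determined, and hence so is $m = s - a$. So the intersection has at most one element, which contradicts its being infinite. This already rules out two elements of $X$, so $X$ cannot be infinite, and $r \notin \Sigma_{2}$.

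For the final clause, Lemma~\ref{lem:descending} gives $\Sigma_{k} \subseteq \Sigma_{2}$ for every $k \geq 2$, so $r \notin \Sigma_{k}$.

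There is no serious obstacle. The one point that needs care is that the criterion gives $S \in w$ for some $w \in \Nstar + \Nstar$ close to $r$, not that $r$ itself is a sum. The argument above uses only $S \in w$, so this is harmless. The hypothesis $S \in r$ is used only to make $S$ a member of $r$, so that the clopen neighbourhood $\overline{S}$ can be fed into Lemma~\ref{lem:membership}.
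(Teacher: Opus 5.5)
Your proof is correct and is essentially the paper's argument. The paper shows that $S$ lies in no sum $u + w$ of free ultrafilters, using the infinite set $\{n : -n + S \in w\} \in u$ and the Sidon bound on $(-n+S)\cap(-n'+S)$, then passes to the closure via the clopen set $\overline{S}$ — exactly what Lemma~\ref{lem:membership} packages — while your explicit use of Lemma~\ref{lem:descending} spells out the final clause the paper leaves implicit.
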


\begin{proof}
If $S \in u + w$ with $u, w \in \Nstar$, then $T = \{n : -n + S \in w\} \in
u$ is infinite, and for distinct $n, n' \in T$ the set $(-n + S) \cap (-n' + S)
\in w$ is infinite; but $m$ in it gives $m + n, m + n' \in S$ with $(m+n') -
(m+n) = n' - n \neq 0$, so the Sidon condition determines the pair and hence $m$
uniquely, contradicting infinitude. Thus no member of $\Nstar + \Nstar$ contains $S$, and as
$\overline{S}$ is clopen, $\overline{S} \cap \Sigma_{2} = \emptyset$.
\end{proof}

This first-level mechanism is not new. Listed in increasing order, an infinite
Sidon set $\{x_{n}\}$ has pairwise distinct gaps, so $x_{n+1} - x_{n} \to \infty$,
and the lemma is a special case of \cite[Exercise~4.1.7]{HindmanStrauss1998}.
Sidon sets are also thin in $\Z$, so the lemma follows from the group results as
well \cite[Theorem~10]{FilaliLutsenkoProtasov2009} (see also \cite[\S 2 and
Theorem~2.2]{ProtasovProtasova2017}). The following corollary is likewise known:
for $(\N,+)$ it is \cite[Theorem~6.35]{HindmanStrauss1998} (see also the proof of
\cite[Theorem~21.22]{HindmanStrauss1998}), and it is the additive case of a
theorem of van Douwen \cite[Theorem~8.1]{vanDouwen1991}.

\begin{corollary}\label{cor:nowhere-dense}
$\Sigma_{2}$ is nowhere dense in $\Nstar$; equivalently, $\Nstar \setminus
\Sigma_{2}$ is dense in $\Nstar$.
\end{corollary}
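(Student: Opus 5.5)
The corollary follows by combining Lemma~\ref{lem:firstgap} with the fact that every infinite subset of $\N$ contains an infinite Sidon subset. The two formulations in the statement are equivalent because $\Sigma_{2}$ is closed (Lemma~\ref{lem:closed}). A closed set is nowhere dense exactly when its complement is dense, and this holds in the subspace $\Nstar$, which contains $\Sigma_{2}$ by Lemma~\ref{lem:descending}. So I will prove that $\Nstar \setminus \Sigma_{2}$ is dense in $\Nstar$.

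The basic open sets of $\Nstar$ are the traces $\overline{A} \cap \Nstar$ for $A \subseteq \N$. Such a trace is nonempty exactly when $A$ is infinite, since an infinite set extends to a free ultrafilter and a finite set lies in none. Fix an infinite $A$. I will choose an infinite Sidon set $S \subseteq A$ by a greedy recursion, listing $S = \{x_{1} < x_{2} < \cdots\}$.

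Suppose $x_{1}, \dots, x_{n}$ have been chosen and all their pairwise differences are distinct. Pick $x_{n+1} \in A$ with
\[
  x_{n+1} > x_{n} + 2\max_{i,j \le n}(x_{i} - x_{j}),
\]
which is possible because $A$ is infinite. Every new difference $x_{n+1} - x_{i}$ is then larger than every old difference. The new differences are also distinct among themselves, since they have different subtrahends. Hence every positive difference has at most one representation, and $S$ is Sidon. (Any sufficiently lacunary choice, for instance $x_{n+1} > 2x_{n}$, also works; in fact $\{e_{n}\}$ itself is Sidon, but that is not needed here.)

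Since $S$ is infinite, there is a free ultrafilter $r$ with $S \in r$, and then $A \in r$ as well. By Lemma~\ref{lem:firstgap}, $r \notin \Sigma_{2}$. So every nonempty basic open set $\overline{A} \cap \Nstar$ meets $\Nstar \setminus \Sigma_{2}$, which gives density.

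The only step needing real care is the Sidon extraction. The point to check there is that the new differences avoid all the old ones, and the growth condition handles that directly.
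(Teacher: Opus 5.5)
Your proof is correct and follows the paper's argument: you reduce to density of $\Nstar \setminus \Sigma_{2}$ using that $\Sigma_{2}$ is closed, extract an infinite Sidon subset of an arbitrary infinite $A$ by a greedy recursion, extend it to a free ultrafilter, and apply Lemma~\ref{lem:firstgap}. The only difference is cosmetic: you make the greedy step explicit with a lacunary growth condition, whereas the paper simply notes that the Sidon condition excludes only finitely many candidates at each stage.
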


\begin{proof}
Since $\Sigma_{2}$ is closed, it suffices to show $\Nstar \setminus \Sigma_{2}$
is dense, i.e.\ that every nonempty basic open set $\overline{A}$ with $A
\subseteq \N$ infinite contains a free ultrafilter outside $\Sigma_{2}$.  By a
classical greedy construction, every infinite $A \subseteq \N$ contains an
infinite Sidon set $B \subseteq A$: having chosen a finite Sidon subset
$\{a_{1}, \dots, a_{k}\} \subseteq A$, only finitely many values are excluded by
the Sidon condition, and $A$ is infinite, so a suitable $a_{k+1} \in A$ exists.
Since $B$ is infinite, $\{X \subseteq \N : B \setminus X \text{ is finite}\}$ is a
proper filter containing every cofinite set; any ultrafilter $p$ extending it
contains no finite set, so $p$ is free.  Then $B \in p$, so
$A \in p$ (as $B \subseteq A$), and $p \notin \Sigma_{2}$ by
Lemma~\ref{lem:firstgap}.  Thus $p \in \overline{A}
\setminus \Sigma_{2}$.
\end{proof}

\begin{lemma}\label{lem:rigid}
The sequence $e_{n} = 2^{2^{n}}$ satisfies:
\begin{enumerate}
\item[(i)] \textup{(Domination)} $e_{M} > e_{1} + \cdots + e_{M-1}$ for all $M
\geq 2$.
\item[(ii)] \textup{(Unique differences)} For each $\delta \neq 0$ there is at
most one pair $(a, c)$, $a < c$, with $e_{c} - e_{a} = \delta$.
\item[(iii)] \textup{(Unique sums of distinct indices)} If $\{i_{1}, \dots,
i_{r}\}$ and $\{i_{1}', \dots, i_{r}'\}$ are two sets of \emph{distinct} indices,
each of the same size $r$, with $e_{i_{1}} + \cdots + e_{i_{r}} = e_{i_{1}'} +
\cdots + e_{i_{r}'}$, then the two sets coincide.
\item[(iv)] Every infinite subsequence of $\{e_{n}\}$, and every translate of
one, is a Sidon set.
\end{enumerate}
\end{lemma}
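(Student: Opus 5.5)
The four claims of Lemma~\ref{lem:rigid} all come from the binary expansion of $e_{n}=2^{2^{n}}$, which has a single $1$ in position $2^{n}$. We prove them in the order (i), (iii), (ii), (iv).

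For (i), domination, we bound the tail sum crudely. For $M\ge2$,
\[
e_{1}+\cdots+e_{M-1}\le (M-1)\,e_{M-1}=(M-1)\,2^{2^{M-1}}.
\]
It therefore suffices that $M-1<2^{2^{M-1}}$, since then the right side is less than $2^{2^{M-1}}\cdot2^{2^{M-1}}=e_{M}$. The inequality $M-1<2^{2^{M-1}}$ is trivial.

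For (iii), unique sums, we use binary expansions directly. A sum of $e_{i}$ over a set of \emph{distinct} indices is a sum of distinct powers of two. Its binary expansion therefore has ones exactly at the positions $2^{i}$. Uniqueness of binary expansion then recovers the index set. As a consequence the hypothesis that the two sets have equal size $r$ is not even needed.

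For (ii), unique differences, suppose $e_{c}-e_{a}=e_{c'}-e_{a'}=\delta$ with $a<c$ and $a'<c'$. Rearranging gives
\[
e_{c}+e_{a'}=e_{c'}+e_{a}.
\]
If $c\ne a'$ and $c'\ne a$, both sides are sums over two distinct indices. Part (iii) then gives $\{c,a'\}=\{c',a\}$, and $c\ne a$ (since $a<c$) forces $c=c'$ and $a=a'$. The degenerate cases need separate handling. If $c=a'$, the left side is $2e_{c}=2^{2^{c}+1}$, a single power of two. The right side is either $e_{c'}+e_{a}$ with $c'\neq a$, which has two ones in binary, or $2e_{a}$ when $c'=a$. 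The second option would give $c=a$, a contradiction, and the first is impossible because $2^{2^{c}+1}$ has only one binary one. The case $c'=a$ is symmetric. This case analysis with coincident indices is the only point needing care.

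For (iv), we reduce to (ii). Translation preserves differences, so it suffices to treat infinite subsequences of $\{e_{n}\}$. Being Sidon is inherited by subsets, so it suffices to treat $\{e_{n}\}$ itself. A representation of $\delta\neq0$ as a difference of two elements of this set is an ordered pair $(x,y)$ with $x-y=\delta$. The sign of $\delta$ determines which element is larger, so such a pair is the same as a pair $(a,c)$, $a<c$, with $e_{c}-e_{a}=|\delta|$. By (ii) there is at most one such pair, which is exactly the Sidon condition.
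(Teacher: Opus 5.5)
Your proof is correct. For (ii) and (iii) it takes a different route from the paper.

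The paper argues from growth alone:
\begin{itemize}
\item For (iii), it takes the largest index $M$ in either set and observes via (i) that both sums must lie in $[e_M, 2e_M)$. So $M$ belongs to both sets, and it inducts on $r$.
\item For (ii), it argues directly. For $a<c$, the difference $e_c - e_a$ lies in $[e_c - e_{c-1}, e_c)$, and these intervals are pairwise disjoint because $e_{c+1} \geq 2e_c$. So $\delta$ determines $c$ and then $a$, with no case split.
\end{itemize}

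You instead use that each $e_n$ is a single power of two:
\begin{itemize}
\item (iii) becomes uniqueness of binary expansion. This needs no induction and, as you note, no equal-size hypothesis.
\item (ii) is reduced to (iii) via $e_c + e_{a'} = e_{c'} + e_a$. The cost is the small case analysis for $c = a'$ or $c' = a$, which you handle correctly.
\item Your (i) uses the crude bound $(M-1)e_{M-1} < e_{M-1}^2 = e_M$ instead of the paper's geometric bound $\sum_{\ell<M} e_\ell < 2e_{M-1} \le e_M$. This is a minor variation.
\item Your (iv) matches the paper's.
\end{itemize}

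What each route buys: yours is shorter for (iii) but depends on the digit structure of $e_n = 2^{2^n}$. The paper's uses only positivity and $e_{n+1} \geq 2e_n$. It therefore shows that (i)--(iv) hold for any such lacunary sequence, in keeping with the remark on the scope of the rigidity in Section~\ref{sec:remarks}.
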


\begin{proof}
(i) From $e_{n+1} = e_{n}^{2} \geq 2 e_{n}$ we get $\sum_{\ell<M} e_{\ell} < 2 e_{M-1}
\leq e_{M}$. (ii) The numbers $e_{c} - e_{a}$ ($a<c$) lie in the
intervals $[e_{c} - e_{c-1}, e_{c})$ by (i), and these are pairwise disjoint because
$e_{c+1} = e_{c}^{2} \geq 2e_{c}$; so $\delta$ determines $c$,
then $a$. (iii) Let $M$ be the largest index in either set; say it lies in the first. As
the indices on the first side are distinct, the terms other than $e_{M}$ are
$e$'s of strictly smaller distinct index, so their sum is at most $\sum_{\ell <
M} e_{\ell} < e_{M}$ by (i); hence the first sum lies in $[e_{M}, 2 e_{M})$. The
second sum equals it, so it too lies in $[e_{M}, 2 e_{M})$; since its terms are
distinct $e$'s, this is possible only if its largest term is $e_{M}$ as well.
Thus both sets contain $M$; remove it from each and induct on $r$. (iv) A
subsequence inherits unique differences from (ii), which is the Sidon property;
translation preserves differences.
\end{proof}

\section{The shift-intersection lemma}\label{sec:shift}

Fix a separated system $F_{1}, \dots, F_{j}$ with $F_{t} = \{ e_{n} + c_{t} : n
\in S_{t} \}$, the $S_{t}$ pairwise disjoint, and put $A = F_{1} + \cdots +
F_{j} \subseteq \Z$. Each element of $A$ is a sum $(e_{n_{1}} + c_{1}) + \cdots + (e_{n_{j}} +
c_{j})$ with $n_{t} \in S_{t}$; since the constants $c_{1} + \cdots + c_{j}$ are
fixed and, by disjointness, the $j$ indices are distinct,
Lemma~\ref{lem:rigid}(iii) determines the set $\{n_{1}, \dots, n_{j}\}$, and
disjointness of the $S_{t}$ then determines each $n_{t}$, so the representation
is unique.

\begin{lemma}\label{lem:shift}
Let $n \neq n'$ be positive integers and
\[
  I = \{ m \in \N : m + n \in A \text{ and } m + n' \in A \}.
\]
Then $I$ is contained in the union of a finite set and finitely many translates
of sub-sums $F_{t_{1}} + \cdots + F_{t_{i}}$ with $1 \leq i \leq j-1$; each such
sub-sum is again a separated-system sum.
\end{lemma}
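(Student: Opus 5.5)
Put $\delta = n' - n \neq 0$. The idea is to compare the two unique representations of $m+n$ and $m+n'$ in $A$. Their difference is the fixed number $\delta$, and the rigidity of Lemma~\ref{lem:rigid} will force every index at which the two representations disagree to be bounded by a constant $N(\delta)$. The coordinates where they agree then range freely, and this exhibits $m$ as an element of a translate of a shorter sub-sum.

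Concretely, take $m \in I$ and write, using the uniqueness of representations noted before the lemma,
\[
m + n = \sum_{t=1}^{j} (e_{n_t} + c_t), \qquad m + n' = \sum_{t=1}^{j} (e_{n'_t} + c_t), \qquad n_t, n'_t \in S_t .
\]
Subtracting gives $\sum_t e_{n'_t} - \sum_t e_{n_t} = \delta$. Let $D = \{t : n_t \neq n'_t\}$. Since the $S_t$ are pairwise disjoint, an index common to $\{n_t\}$ and $\{n'_t\}$ must occur at the same position $t$. Hence after cancellation we are left with
\[
\sum_{t\in D} e_{n'_t} - \sum_{t \in D} e_{n_t} = \delta ,
\]
where the two index sets $X = \{n_t : t \in D\}$ and $Y = \{n'_t : t \in D\}$ are disjoint and each consists of distinct indices. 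Moreover $D \neq \emptyset$, because $\delta \neq 0$.

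The key estimate, which I expect to be the main obstacle (though it is short), is to bound the indices in $X \cup Y$. Let $M = \max(X \cup Y)$, and say $M \in Y$; the other case is symmetric. Every other index in $X \cup Y$ is distinct and smaller than $M$, so Lemma~\ref{lem:rigid}(i) gives
\[
|\delta| \;\geq\; e_M - \sum_{\ell < M} e_\ell \;>\; e_M - 2e_{M-1} \;=\; e_M - 2\sqrt{e_M}.
\]
The right-hand side tends to infinity with $M$, so there is $N = N(\delta)$ with $M \le N$. Consequently all $n_t, n'_t$ with $t \in D$ are at most $N$. This is exactly the statement that a fixed difference forces large indices to cancel.

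Now fix the data $\bigl(D, (n_t)_{t\in D}\bigr)$. There are finitely many possibilities, since $D \subseteq \{1,\dots,j\}$ is nonempty and each $n_t \le N$. For each such choice, $m$ lies in
\[
-n + \sum_{t \in D}(e_{n_t}+c_t) + \sum_{t \notin D} F_t .
\]
If $D = \{1,\dots,j\}$, this is a single integer, so these $m$ form a finite set. Otherwise $i = j - |D|$ satisfies $1 \le i \le j-1$, and $m$ lies in a translate of the sub-sum $F_{t_1}+\cdots+F_{t_i}$ over the complement of $D$. Taking the union over the finitely many choices covers $I$. Finally, any sub-collection of a separated system is again a separated system: its members are translates of infinite subsequences of $\{e_n\}$ whose index sets are still pairwise disjoint. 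So each sub-sum is a separated-system sum, as claimed.
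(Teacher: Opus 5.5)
Your proof is correct, but you organize the key estimate differently from the paper.

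The paper never fixes the set of disagreeing coordinates in advance. It proceeds iteratively:
\begin{itemize}
\item It takes the largest index $M$ in \eqref{eq:delta} and bounds the other $2j-1$ terms termwise by $e_{M-1}$, getting $|\delta|\ge e_{M-1}(e_{M-1}-(2j-1))$.
\item Above a threshold $B(\delta,j)$ the top index must therefore cancel inside its own coordinate, so that coordinate is deleted and the step is repeated.
\item The process stops at a non-cancelling top index $M^{*}\le B$. Its $D(m)$ is the set of surviving coordinates, which may still contain agreeing ones below $M^{*}$.
\end{itemize}

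You instead take $D=\{t:n_t\neq n'_t\}$ directly. You then observe that disjointness of the $S_t$ makes $X$ and $Y$ disjoint sets of pairwise distinct indices, so the reduced identity is a signed sum of distinct $e$'s with no cancellation left. The domination argument behind Lemma~\ref{lem:rigid}(iii), in the form $\sum_{\ell<M}e_\ell<2e_{M-1}$, then bounds every disagreeing index in one step.

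This buys you several things:
\begin{itemize}
\item a shorter argument with no iteration;
\item a threshold $N(\delta)$ independent of $j$;
\item a slightly finer cover, since agreeing coordinates stay free whatever their size;
\item weaker demands on the sequence: you only need $e_M-\sum_{\ell<M}e_\ell\to\infty$.
\end{itemize}
For instance, your argument runs verbatim for $e_n=3^n$, where the paper's bound $e_M-(2j-1)e_{M-1}$ is nonpositive once $j\ge 2$. The paper's cruder termwise estimate is precisely what leads its remarks on scope to ask for $e_{n+1}/e_n\to\infty$.
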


\begin{proof}
Fix $\delta = n' - n \neq 0$. For $m \in I$ write $m + n = \sum_{t} (e_{n_{t}} +
c_{t})$ and $m + n' = \sum_{t} (e_{n_{t}'} + c_{t})$, with $n_{t}, n_{t}' \in
S_{t}$. Subtracting, the constants $c_{t}$ cancel and
\begin{equation}\label{eq:delta}
  \delta = \sum_{t=1}^{j} \bigl( e_{n_{t}'} - e_{n_{t}} \bigr).
\end{equation}
Because the representations respect the system coordinate-by-coordinate, every
index appearing in the $t$-th coordinate, primed or not, lies in $S_{t}$, and
the $S_{t}$ are disjoint.

Let $M$ be the largest index occurring in \eqref{eq:delta}. The index $M$ lies in
exactly one $S_{t}$ (disjointness), so $e_{M}$ can occur in \eqref{eq:delta} only
within coordinate $t$, as $e_{n_{t}'}$ (if $n_{t}' = M$) or $e_{n_{t}}$ (if
$n_{t} = M$). Suppose $e_{M}$ does not cancel there, i.e.\ not both $n_{t}' =
n_{t} = M$; then coordinate $t$ contributes $\pm e_{M}$ plus a term of smaller
index; in particular $n_{t} \neq n_{t}'$, so $M \geq 2$. The remaining $2j - 1$ terms of \eqref{eq:delta} each have index $< M$,
hence are at most $e_{M-1}$, so they total at most $(2j-1)\,e_{M-1}$ in absolute
value. Thus
\[
  |\delta| \;\geq\; e_{M} - (2j-1)\,e_{M-1}
  \;=\; e_{M-1}\bigl(e_{M-1} - (2j-1)\bigr),
\]
using $e_{M} = e_{M-1}^{2}$. For fixed $j$ the right-hand side tends to infinity
with $M$, so there is $B = B(\delta, j)$ such that it exceeds $|\delta|$ for every
$M > B$. Hence if $M > B$, the term $e_{M}$ cancels, which forces $n_{t}' = n_{t} =
M$; coordinate $t$ then contributes $0$ to \eqref{eq:delta} and its common index
is unconstrained by the equation.

Deleting a coordinate whose indices cancel leaves an equation of the same shape
with one fewer coordinate, to which the argument applies again; with $c \leq j$
coordinates the remaining terms total at most $(2c-1)\,e_{M-1} \leq
(2j-1)\,e_{M-1}$, so the same $B$ works. Iterating,
each step either removes a coordinate whose largest index cancels or terminates.
Not every coordinate can be removed, since $\delta \neq 0$; so the process
terminates at a largest surviving index $M^{*}$ that does not cancel, and then
$M^{*} \leq B$.

Thus each $m \in I$ determines a set $D(m) \subseteq \{1, \dots, j\}$ of
coordinates, the ones surviving when the process terminates, with three
properties: $D(m) \neq \emptyset$, since $\delta \neq 0$; $n_{t} \leq B$ for $t
\in D(m)$; and $n_{t} = n_{t}'$ for $t \notin D(m)$. There are only finitely
many possibilities for $D(m)$ together with the values $n_{t}$, $t \in D(m)$.
For each of them, the corresponding elements $m = \sum_{t}(e_{n_{t}} + c_{t}) - n$ lie in
the translate
\[
  \Bigl(\sum_{t \in D}(e_{n_{t}} + c_{t}) - n\Bigr) + \sum_{t \notin D} F_{t}
\]
of the sub-sum over the $j - |D| \leq j - 1$ remaining coordinates. When $D =
\{1, \dots, j\}$ this translate is a single point, and we put it into the finite
set. This proves the lemma.
\end{proof}

\section{Proof of the Master Lemma}\label{sec:master}

\begin{proof}[Proof of Theorem~\ref{thm:master}]
We argue by induction on $j$, proving
\[
  P(j):\quad \begin{array}{l}\text{for every separated system } F_{1}, \dots,
  F_{j} \text{ and every } s \in \Nstar,\\ \text{if } (F_{1} + \cdots + F_{j})
  \cap \N \in s \text{ then } s \notin \Sigma_{j+1}.\end{array}
\]

\emph{Base $j = 1$.} A single $F_{1}$ is an integer translate of an infinite
subsequence of $\{e_{n}\}$, hence a Sidon set by Lemma~\ref{lem:rigid}(iv), and so is its infinite subset
$F_{1} \cap \N$. If
$F_{1} \cap \N \in s$ then $s \notin \Sigma_{2}$ by Lemma~\ref{lem:firstgap}.

\emph{Inductive step.} Let $j \geq 2$ and assume $P(1), \dots, P(j-1)$. Let
$F_{1}, \dots, F_{j}$ be a separated system, $A = (F_{1} + \cdots + F_{j}) \cap \N \in s$,
and suppose for contradiction that $s \in \Sigma_{j+1}$. By the cumulative
membership criterion (Lemma~\ref{lem:membership}) applied to the member $A \in
s$, there are $r \in \Nstar$ and $v \in \Sigma_{j}$ with $A \in r + v$. Then
\[
  T = \{ n : -n + A \in v \} \in r
\]
is infinite, $r$ being free. Choose distinct $n, n' \in T$; then $-n + A, -n' + A
\in v$, so $I = (-n+A) \cap (-n'+A) \in v$; this is the set $I$ of Lemma~\ref{lem:shift}
(formed with $F_{1} + \cdots + F_{j} \subseteq \Z$), since $m + n \in \N$ for $m \in \N$. By Lemma~\ref{lem:shift}, $I$ is
contained in the union of a finite set and finitely many translates of sub-sums,
each with between $1$ and $j-1$ summands. As $v \in \Sigma_{j} \subseteq \Nstar$ is
free, the finite part is not in $v$. If there are no such translates, then $I$ is finite; but $I \in v$ and
$v$ is free, a contradiction. Otherwise, since $v$ is an ultrafilter, one of the
translates, call it $Y$, satisfies $Y \cap \N \in v$. Thus $Y$ is a
translate of a sub-sum $F_{t_{1}} + \cdots + F_{t_{i}}$ with $1 \leq i \leq
j-1$, and the inductive hypothesis $P(i)$ is available.

Now $Y$ is a translate of the separated-system sum $F_{t_{1}} + \cdots +
F_{t_{i}}$. Adding the translation constant to one of the summands $F_{t_{1}}$
keeps it an integer-translate of an infinite subsequence of $\{e_{n}\}$ and
leaves the index-disjointness intact, so $Y$ is again a separated-system sum,
of size $i \leq j-1$. By the inductive hypothesis $P(i)$, since $Y \cap \N
\in v$ we get $v \notin \Sigma_{i+1}$. As $i + 1 \leq j$, we have $\Sigma_{j}
\subseteq \Sigma_{i+1}$ (Lemma~\ref{lem:descending}), and therefore $v \notin \Sigma_{j}$. This contradicts $v \in
\Sigma_{j}$. Hence $s \notin \Sigma_{j+1}$, proving $P(j)$.
\end{proof}

\begin{remark}
The induction never uses that the peeled-off remainder $v$ is itself an iterated
sum; it uses only that $v \in \Sigma_{j}$ and $v$ is free. This is precisely why the
argument runs inside the cumulative filtration, whose levels are closures: the
obstruction statement $P(i)$ speaks of an arbitrary free ultrafilter containing a
separated-system sum, and applies to a limit point exactly as to an actual sum.
\end{remark}

\begin{proof}[Proof of Theorem~\ref{thm:main}]
As $p$ is a $k$-fold sum of free ultrafilters, $p \in \Sigma_{k}$ by
Proposition~\ref{prop:two-sided-k}. By Lemma~\ref{lem:summembers}, $A_{k} =
E_{0}^{(k)} + \cdots + E_{k-1}^{(k)} \in q_{0} + \cdots + q_{k-1} = p$. The sets $E_{0}^{(k)},
\dots, E_{k-1}^{(k)}$ have disjoint index sets (the residue classes modulo $k$),
so they form a separated system of size $k$; by Theorem~\ref{thm:master} applied
to $s = p$, $A_{k} \in p$ gives $p \notin \Sigma_{k+1}$. Hence $p \in \Sigma_{k}
\setminus \Sigma_{k+1}$.
\end{proof}

\section{Remarks}\label{sec:remarks}

\subsection*{The pure filtration}
Let $\Pi_{k} = \overline{\Nstar + \cdots + \Nstar}$ ($k$ summands) be the pure
filtration, the closure of the $k$-fold sums in a single step. Its group
counterpart $\overline{(G^{*})^{n}}$ is also considered by Protasov and Protasova,
who state that it is strictly decreasing, remarking that this can be proved by
analogy with their Theorem~4.2 \cite[Theorem~4.3]{ProtasovProtasova2017}. Here
$\Pi_{k} \subseteq \Sigma_{k}$, since $\Sigma_{k}$ is closed and contains the
$k$-fold sums (Proposition~\ref{prop:two-sided-k}). The two agree for $k \leq 2$; whether $\Pi_{k} = \Sigma_{k}$ for
$k \geq 3$ we do not know (Question~\ref{q:pure}). Our
witnesses lie in the pure gaps as well: since $p$ is a $k$-fold sum, $p \in
\Nstar + \cdots + \Nstar \subseteq \Pi_{k}$, while $p \notin \Sigma_{k+1}
\supseteq \Pi_{k+1}$, so $p \in \Pi_{k} \setminus \Pi_{k+1}$. The point of the present argument
is that the obstruction never required the pure structure: the induction runs in
the cumulative filtration directly, peeling one summand at a time and using only
that the remainder lies in $\Sigma_{j}$.

\subsection*{Scope of the rigidity}
The proof uses the properties of Lemma~\ref{lem:rigid} together with the growth
estimate $e_{M} - (2j-1)e_{M-1} \to \infty$ from the proof of
Lemma~\ref{lem:shift}. Any sequence with properties (i)--(iv) and $e_{n+1}/e_{n}
\to \infty$ would serve in place of $2^{2^{n}}$; the doubly exponential choice is
merely the most transparent. In
particular the partition into residue classes is one convenient way to produce a
separated system, but the Master Lemma holds for every separated system, so many
other explicit sets witness each level.

\subsection*{The group \texorpdfstring{$\Z$}{Z}}
Apart from intersecting with $\N$, nothing in Sections~\ref{sec:shift}
and~\ref{sec:master} uses that $n$, $n'$ or the elements of $I$ are positive. Running the same argument in $\beta\Z$, with
the chain $I_{\Z,0} = \Z^{*}$, $I_{\Z,k} = \overline{\Z^{*} + I_{\Z,k-1}}$ of
Protasov and Protasova, shows that no ultrafilter in $I_{\Z,j}$ contains a
separated-system sum $F_{1} + \cdots + F_{j}$; Lemmas~\ref{lem:descending},
\ref{lem:membership}, \ref{lem:summembers} and~\ref{lem:firstgap} and
Proposition~\ref{prop:two-sided-k} hold in $\beta\Z$ with the same proofs. Since $\bN$ is a closed subsemigroup of $\beta\Z$, and for $p, q \in \bN$
and $A \subseteq \Z$ we have $A \in p + q$ in $\beta\Z$ exactly when $A \cap \N
\in p + q$ in $\bN$, the witnesses of Theorem~\ref{thm:main} lie in
$I_{\Z,k-1} \setminus I_{\Z,k}$. They therefore lie in every gap of the
Protasov--Protasova chain for $\Z$ as well, witnessed by the same explicit sets
$A_{k}$, and, being $k$-fold sums, in the gaps of
the pure chain $\overline{(\Z^{*})^{k}}$, whose strictness Protasov and Protasova
assert by analogy with their Theorem~4.2 \cite[Theorem~4.3]{ProtasovProtasova2017}.

\section{The bottom of the filtration}\label{sec:bottom}

Proposition~\ref{prop:two-sided} gives $\overline{\K} \cup \overline{\E} \subseteq
\Sigma_{\infty}$.  The following rules out the identification $\Sigma_{\infty} =
\overline{\K}$.

\begin{proposition}\label{prop:strict-K}
$\overline{\K} \subsetneq \Sigma_{\infty}$.
\end{proposition}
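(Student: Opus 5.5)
The plan is to exhibit an idempotent that lies outside $\overline{\K}$. Proposition~\ref{prop:two-sided} already gives $\overline{\E} \subseteq \Sigma_{\infty}$, so every idempotent of $\Nstar$ lies in $\Sigma_{\infty}$. It therefore suffices to find $e \in \E$ and a set $X \in e$ with $\overline{X} \cap \overline{\K} = \emptyset$.

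\textbf{Choice of the idempotent.} I will take $X = \FS(\langle e_{n} \rangle_{n \geq 1})$, the set of finite sums $\sum_{n \in F} e_{n}$ over nonempty finite $F \subseteq \N$, with $e_{n} = 2^{2^{n}}$ as in Definition~\ref{def:partition}. By the standard Galvin--Glazer construction \cite[Lemma~5.11]{HindmanStrauss1998}, the set $\bigcap_{m} \overline{\FS(\langle e_{n} \rangle_{n \geq m})}$ is a closed subsemigroup of $\Nstar$. By Ellis's lemma it contains an idempotent $e$, and then $X \in e$, so $e \in \Sigma_{\infty}$.

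\textbf{Reduction to a density statement.} I will use the characterization that $\overline{X} \cap \K \neq \emptyset$ iff $X$ is piecewise syndetic \cite[Theorem~4.40]{HindmanStrauss1998}. Since $\overline{X}$ is open, it meets $\overline{\K}$ iff it meets $\K$. So it remains to show that $X$ is not piecewise syndetic. I will do this via upper Banach density. If $X$ were piecewise syndetic, there would be a finite $G \subseteq \N$ such that $\bigcup_{g \in G}(-g + X)$ contains arbitrarily long intervals. Then $X$ would contain at least $L/|G| - O(1)$ elements in some interval of each length $L$, so its upper Banach density would be at least $1/|G| > 0$. Hence it suffices to prove that $|X \cap [y, y+L]| = o(L)$ uniformly in $y$.

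\textbf{The counting estimate, the main step.} Let $N(L) = \#\{i : e_{i} \leq 2L\}$, which is about $\log_{2}\log_{2} L$. Write $x \in X \cap [y, y+L]$ as $x = \sum_{i \in F} e_{i}$. Split $F$ into its high part $H = \{i \in F : e_{i} > 2L\}$ and its low part, which is a subset of $\{1, \dots, N(L)\}$. The low part contributes a sum at most $\sum_{i \leq N(L)} e_{i} < 2e_{N(L)} \leq 4L$, using Lemma~\ref{lem:rigid}(i). I claim that the high part is the same for all $x$ in the interval, up to boundedly many choices. The reason is that distinct high parts have sums differing by at least $\min_{i \in H} e_{i} - (\text{smaller terms}) > L$ once the low contributions are absorbed, by a domination argument in the style of Lemma~\ref{lem:rigid}(i)--(iii). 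Granting this, $|X \cap [y, y+L]| \leq C \cdot 2^{N(L)} = O(\log L) = o(L)$. So $X$ has Banach density zero, $e \notin \overline{\K}$, and $\overline{\K} \subsetneq \Sigma_{\infty}$.

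I expect the bookkeeping in the high/low split to be the only delicate point. Specifically, one must check that all the $x$ lying in one interval of length $L$ share their high indices, up to $O(1)$ alternatives. If that uniform bound proves awkward, I will fall back on the cruder estimate $|X \cap [1, Z]| \leq 2^{\#\{i : e_{i} \leq Z\}}$ combined with a translate-invariance argument. In that argument, every element of $X \cap [y, y+L]$ is the sum of a fixed high part and an element of $\FS(e_{1}, \dots, e_{N(L)})$.
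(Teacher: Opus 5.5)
Your proposal is correct and follows the paper's route. An idempotent $e \in \bigcap_{m}\overline{\FS(\langle x_{n}\rangle_{n \geq m})}$, for a sparse sequence whose $\FS$-set is not piecewise syndetic, lies in $\overline{\E} \subseteq \Sigma_{\infty}$ but outside $\overline{\K}$; the only difference is that the paper takes $x_{n} = 2^{2n}$ and cites Adams--Hindman--Strauss for non-piecewise-syndeticity, whereas you take $x_{n} = e_{n}$ and prove it directly by a Banach-density count, which does go through: distinct high sums differ by at least $e_{N+1}$ while low parts stay below $2e_{N} = 2\sqrt{e_{N+1}}$, so for $L \geq 2$ the high part is in fact unique in each window and $|X \cap [y, y+L]| \leq 2^{N(L)} \leq \log_{2}(2L)$.
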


\begin{proof}
Hindman and Strauss~\cite[Theorem~3.5]{HindmanStrauss2011} construct an idempotent
$e \in \E$ outside $\overline{\K}$: they pick $e \in \overline{\FS(\langle
2^{2n} \rangle_{n=1}^{\infty})}$ \cite[Lemma~5.11]{HindmanStrauss1998} and
observe that $\FS(\langle 2^{2n} \rangle)$ is not piecewise
syndetic~\cite[Corollary~4.2]{AdamsHindmanStrauss2008},
so $e \notin \overline{\K}$ by~\cite[Corollary~4.41]{HindmanStrauss1998}.  By
Proposition~\ref{prop:two-sided}, $e \in \overline{\E} \subseteq \Sigma_{\infty}$.
Hence $e \in \Sigma_{\infty} \setminus \overline{\K}$.  Essentially the same set
(indexed from $n = 0$) appears in
\cite[Exercise~4.4.3]{HindmanStrauss1998}, and the argument goes back to the proof
of \cite[Corollary~6.33]{HindmanStrauss1998}.
\end{proof}

Proposition~\ref{prop:strict-K} rules out $\Sigma_{\infty} = \overline{\K}$.
The next natural candidate is the minimal closed two-sided ideal of $\Nstar$
containing all idempotents; call it $M$.  For a group $G$, Protasov and
Protasova~\cite[Theorem~4.1]{ProtasovProtasova2017} characterized the analogous
ideal of $\beta G$ as $\mathrm{Sc}^{\wedge}_{G}$, the dual (in their
correspondence between ideals in $\mathcal{P}(G)$ and closed subsets of $\beta
G$) of the family of \emph{scattered} subsets of $G$.

\begin{question}\label{q:identification}
Does $\Sigma_{\infty} = M$?  A positive answer would identify
$\Sigma_{\infty}$ with the minimal closed two-sided ideal containing the
idempotents.  A negative answer would show that the depth filtration bottoms out
strictly above $M$.  We have $M \subseteq \Sigma_{\infty}$ from
Proposition~\ref{prop:two-sided} (for groups, compare
\cite[Theorem~4.2(ii)]{ProtasovProtasova2017}); the question is whether the reverse inclusion
holds.
\end{question}

For groups, Protasov and Protasova conjecture that $\mathrm{Sc}^{\wedge}_{G}
\neq \bigcap_{n \in \omega} \overline{(G^{*})^{n}}$
\cite[Conjecture~4.1]{ProtasovProtasova2017}, where $\mathrm{Sc}^{\wedge}_{G}
\subseteq \overline{(G^{*})^{n}}$ for every $n$
\cite[Theorem~4.3(ii)]{ProtasovProtasova2017}. The analogue for $\N$, $M
\subsetneq \bigcap_{k} \Pi_{k}$, would give a negative answer to
Question~\ref{q:identification}, since $\Pi_{k} \subseteq \Sigma_{k}$.

\begin{question}\label{q:pure}
Is $\Pi_{k} = \Sigma_{k}$ for every $k \geq 3$?
\end{question}

\section*{Acknowledgments}

The author thanks Boris \v{S}obot for reading an earlier version of this paper
and for helpful correspondence.

\end{document}